\newtheorem{theorem}{Theorem}
\newtheorem{definition}[theorem]{Definition}
\newtheorem{example}[theorem]{Example}
\newtheorem{proposition}[theorem]{Proposition}
\begin{document}

\title{A note on combining chaotic dynamical systems using the fuzzy logic
XOR operator.}
\author{Rezki Chemlal. \and Laboratoire de Math\'{e}matiques Appliqu\'{e}es,
Facult\'{e} des sciences exactes.Universit\'{e} Abderahmane Mira
Bejaia.06000 Bejaia Algeria.}
\maketitle

\begin{abstract}
In this paper we explore whatever combining two chaotic dynamical systems
using the fuzzy logic operator XOR\ can maintain or not the chaotic
properties of the resulting dynamical system. This study is motivated by
techniques used in applications to secure communications ,images encryption
and cryptography.

Key words : Chaos, fuzzy logic, ergodic theory, full branch.
\end{abstract}

\section{Introduction}

Chaotic dynamical systems are commonly used as models in a wide range of
applications, cryptography \cite{Koc01},\cite{Bap98},\cite{Sch01},\cite%
{Vai03} image encryption and retrieval \cite{Mykola2020},\cite{Rou00},\cite%
{CD04} and achievement of associative memory properties \cite{Dmi02},\cite%
{Dmi91},\cite{Kwan03}.

Extreme sensitivity to initial condition is an interesting property of
chaotic systems. This property makes chaotic systems a worthy choice for
constructing cryptosystems or for image encryption.

Another idea is to mix two or more chaotic dynamical systems to gain more
"unpredictably" or more "confusion" in order to enhance encryption process 
\cite{Pareek05},\cite{Mykola2020}.

This rise a natural question, whatever combining two chaotic dynamical
systems permits to maintain chaotic property of the resulting one ? In
particular whatever combining two chaotic dynamical systems by fuzzy logic
operators, mainly the operator xor, gives rise to a chaotic dynamical system.

In other words consider two chaotic dynamical systems $\left( I,F\right) $
and $\left( I,G\right) $ is the dynamical system $\left( I,F\ast G\right) $
still chaotic ? Where * stands for any fuzzy logic operator.

We studied some combination of known chaotic dynamical systems and checked
whatever the combination is still chaotic or not. This gave us some
preliminary remarks about how to combine chaotic dynamical systems in order
to maintain the chaotic properties of the resulting dynamical system.

\section{Dynamical systems}

\subsection{Topological dynamics}

A dynamical system $\left( X,F\right) $ consists of a compact metric space $%
X $ and a continuous self--map $F$ .

A point $x$ is said periodic if there exists $p>0$ with $F^{p}\left(
x\right) =x.$ The least $p$ with this property is called the period of $x.$
A\ point $x$ is eventually or ultimately periodic if $T^{m}\left( x\right) $
is periodic for some $m\geq 0.$

In the same way a dynamical system is said periodic if there exists $p>0$
with $F^{p}\left( x\right) =x$ for every $x\in X$ and eventually periodic if 
$F^{m}$ is periodic for some $m\geq 0.$

A point $x\in X$ is said to be an equicontinuity point, or to be Lyapunov
stable, if for any $\epsilon >0$, there exists $\delta >0$ such that if $%
d\left( x,y\right) <\delta $ one has $d\left( F^{n}\left( y\right)
,F^{n}\left( x\right) \right) <\epsilon $ for any integer $n\geq 0$.

We say that $\left( X,F\right) $ is sensitive if for any $x\in X$ we have :%
\[
\exists \epsilon >0,\forall \delta >0,\exists y\in B_{\delta }\left(
x\right) ,\exists n\geq 0\text{ }\mathrm{such}\text{ \textrm{that }}d\left(
F^{n}\left( y\right) ,F^{n}\left( x\right) \right) \geq \epsilon .
\]

We say that $\left( X,T\right) $ is expansive if we have :%
\[
\exists \epsilon >0,\forall x\neq y\in X,\exists n\geq 0,d\left( F^{n}\left(
x\right) ,F^{n}\left( y\right) \right) \geq \epsilon .
\]

A dynamical system $\left( X,T\right) $\emph{\ }is transitive if for any
nonempty open sets $U,V\subset A^{\mathbb{Z}}$ there exists $n>0$ with $%
U\cap F^{-n}\left( V\right) \neq \emptyset .$ This is equivalent to the
existence of a point with a dense orbit.

A dynamical system is said topologically mixing if for any nonempty open
sets $U,V\subset A^{\mathbb{Z}},U\cap F^{-n}\left( V\right) \neq \emptyset $
for all sufficiently large $n$.

Let $\left( X,F\right) $ be a dynamical system, endow the set $X$ with a
sigma algebra $\mathbb{B}$. The function $F$ preservers some measure $\mu $
on the sigma algebra $\mathbb{B}$ iff for every $B\in \mathbb{B}$ we have $%
\mu (F^{-1}(B))=\mu (B).$ We say then that $\left( X,\mathbb{B},F,\mu
\right) $ is a measurable dynamical system.

The topological support of a measure is defined as the set of all points $%
x\in X$ for which every open neighborhood of $x$ has positive measure.

A dynamical system $\left( X,\mathbb{B},F,\mu \right) $ is ergodic if every
invariant subset of $X$ is either of measure 0 or of measure 1.
Equivalently, if for any measurable $U,V\subset X,$ there exists some $n\in 
\mathbb{N}$ such that $\mu \left( U\cap F^{-n}\left( V\right) \right) >0.$

If a dynamical system is ergodic then it is transitive on the topological
support of the measure.

For dynamical systems on the real line a common used measure is the Lebesgue
measure, the topological support of the Lebesgue measure is $\mathbb{R}$%
.\newpage

\subsection{Chaotic dynamical systems on the interval}

Dynamical systems defined on the real line have a particular behavior, a
rich literature is devoted for the subject, we will recall here some results
about their properties.

In matter of chaos The Devaney's chaos is seen as a combination of
unpredictably (sensitivity) and regular behaviors (periodic points),
transitivity ensuring that the system is undecomposable.

\begin{definition}
A topological dynamical system $\left( X,f\right) $ is chaotic in the sense
of Devaney if :\newline
(1) Is transitive.\newline
(2) The set of periodic points is dense in X.\newline
(3) Is sensitive to initial conditions.
\end{definition}

It is know that for every dynamical system the conditions 1 and 2 implies
the condition 3 which lead to the so called Modified Devaney definition of
chaos.

For interval maps transitivity is enough to imply the other two conditions.

\begin{proposition}
An interval map is chaotic in the sense of Devaney if and only if it is
transitive.
\end{proposition}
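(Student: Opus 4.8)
One direction is immediate: if $(I,f)$ is chaotic in the sense of Devaney then it is transitive by clause (1) of the definition. All the content is therefore in the converse, that transitivity of an interval map forces the other two clauses. Here I would invoke the implication recorded just above the statement, namely that transitivity together with density of periodic points already yields sensitivity. Consequently it suffices to prove a single assertion: \emph{if $f$ is transitive on the interval $I$, then its periodic points are dense}. Once that is done, sensitivity comes for free and all three Devaney conditions hold.

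The mechanism for manufacturing periodic points is the intermediate value theorem in covering form. If $K=[a,b]\subseteq I$ is a closed subinterval and some iterate satisfies $f^{n}(K)\supseteq K$, then the continuous function $x\mapsto f^{n}(x)-x$ changes sign on $K$ (because $f^{n}$ maps $K$ over itself), hence vanishes at some $p\in K$, giving $f^{n}(p)=p$: a periodic point of $f$ lying in $K$. The whole problem thus reduces to the covering statement: for every nonempty open subinterval $J\subseteq I$ there exist a closed subinterval $K\subseteq J$ and an integer $n\geq 1$ with $f^{n}(K)\supseteq K$.

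To establish the covering I would first collect the easy structural consequences of transitivity for an interval map. Since $I$ is compact and connected, $f(I)$ is a compact subinterval, and as it must be dense it equals $I$, so $f$ is onto and the continuous image of any subinterval is again a subinterval. Next I would exploit that the forward orbit of $J$ is dense: the intervals $f^{n}(J)$ cannot all remain short, and by connectedness together with density they must eventually stretch across a definite portion of $I$. Pushing this, one shows that iterates of $J$ eventually contain a prescribed compact piece of $I$, from which a closed $K\subseteq J$ with $f^{n}(K)\supseteq K$ can be extracted: choosing $K\subseteq J$, transitivity supplies a time $n$ at which $f^{n}(K)$ overlaps $J$ and, by the spreading just described, runs past both endpoints of $K$, which is exactly the covering $f^{n}(K)\supseteq K$.

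The delicate point, and the step I expect to be the main obstacle, is controlling how the images $f^{n}(J)$ grow, because $f$ need not be monotone: a turning point can fold an interval and temporarily shrink it, so density of the orbit does not by itself guarantee that a single image interval becomes large. One must argue that folding cannot keep every image small, and one must also accommodate the genuinely non-mixing transitive maps, in which a central fixed point splits $I$ into two halves that $f$ interchanges; there the clean covering appears only for $f^{2}$ on each half, so the argument should be carried out for a suitable iterate and then transferred back to $f$. Handling these two phenomena, non-monotonicity through turning points and the period-two swap alternative, is where the real work lies; once the covering lemma is secured, density of periodic points, and hence full Devaney chaos, follows at once from the reduction above.
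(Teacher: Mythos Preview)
The paper does not actually prove this proposition: immediately after stating it, the author writes ``For the proof of this result you can look at \cite{Vel94}'' and moves on. So there is no in-paper argument to compare against; the paper's ``proof'' is a bare citation to Vellekoop and Berglund.

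Your outline is essentially the strategy of that reference: reduce to showing that transitivity implies density of periodic points (sensitivity then following from Banks--Brooks--Cairns--Davis--Stacey), and obtain periodic points by the intermediate value theorem once a closed subinterval $K\subseteq J$ and an $n$ with $f^{n}(K)\supseteq K$ are found. You correctly flag the two genuine issues in the covering step, namely the folding caused by turning points and the possible period-two decomposition of $I$; Vellekoop--Berglund handle the growth of the $f^{n}(J)$ by showing that for a transitive interval map with no fixed point in $J$ one has $\bigcup_{n\ge 0} f^{n}(J)=I$ up to an endpoint, which forces some $f^{n}(J)$ to cover $J$. Your sketch stops just short of this lemma, but the plan is sound and matches the cited source; relative to the paper itself you are supplying strictly more detail than the author does.
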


For the proof of this result you can look at \cite{Vel94}.

\section{Results}

\subsection{Preliminary remarks}

One issue when using the fuzzy logic xor\ operator is preserving the
invariance of the resulting dynamical system if we want to combine two
dynamical systems they must be defined on the same interval but this is not
enough to ensure invariance of the resulting combined dynamical system.

If the two dynamical systems are defined on the interval $\left[ 0,1\right] $
it is easy to show that the result will be invariant on the interval $\left[
0,1\right] .$ One solution to overcome the problem of the invariance is to
rescale every dynamical system defined on a given interval to $\left[ 0,1%
\right] .$

Below we give two examples the first one of an xor combination of two
chaotic dynamical systems which is not chaotic and the second one where the
result is a chaotic dynamical system.

\begin{example}
Let us consider the two dynamical systems $\left( \left[ 0,1\right]
,f_{r}\right) $ and $\left( \left[ 0,1\right] ,T\right) $ where $f_{r}$ and $%
T$ are the logistic map and the tent map respectively. 
\[
f_{r}\left( x\right) =r.x.\left( 1-x\right) ,T\left( x\right) =\left\{ 
\begin{array}{c}
-2x+1,0\leq x\leq 0.5 \\ 
2x-1,0.5\leq x\leq 1%
\end{array}%
\right.
\]%
These two dynamical systems are well known chaotic systems , let us consider
their fuzzy xor combination H defined by%
\[
H\left( x\right) =\max \left( f_{r}\left( x\right) ,T\left( x\right) \right)
-\min \left( f_{r}\left( x\right) ,T\left( x\right) \right)
\]%
It possesses two fixed points, the fixed point 0 is instable while the fixed
point 0.23 is asymptotically stable. The basin of attraction of the point $0$
contains 4 isolated points while the basin of attraction of 0.23 contain the
hole interval except the basin of attraction of 0.
\end{example}

\begin{example}
Consider the two following dynamical systems the two dynamical systems $%
\left( \left[ 0,1\right] ,T\right) $ and $\left( \left[ 0,1\right]
,ST\right) $ where $T$ is the tent map and ST that is defined by 
\[
ST\left( x\right) =\left\{ 
\begin{array}{c}
-2x+1:0\leq x\leq \frac{1}{2} \\ 
2x-1:\frac{1}{2}\leq x\leq \frac{1}{4}%
\end{array}%
\right. 
\]%
The graph of the function $ST$ is given below, you can see it as an inverted
Tent map graph. The map $ST$ is chaotic as the point $\frac{\pi }{3.5}$ has
a dense orbit. The map $S$ $xor$ $ST$ is chaotic .
\end{example}

\subsection{Numerical experiments summary}

Along with the two examples shown before we have tested some other dynamical
systems, part of the results is shown in the following table.%
\[
\begin{tabular}{|l|l|l|l|l|l|}
\hline
{\tiny xor} & {\tiny Doubling\ map} & {\tiny Cubic\ map} & {\tiny Logistic\
map} & {\tiny Tent\ map} & {\tiny Inverted\ Tent\ map} \\ \hline
{\tiny Doubling\ map} & {\tiny .....} & {\tiny Non chaotic} & {\tiny Non
chaotic} & {\tiny Non chaotic} & {\tiny Non chaotic} \\ \hline
{\tiny Cubic\ map} & {\tiny .....} & {\tiny .....} & {\tiny Non chaotic} & 
{\tiny Non chaotic} & {\tiny Non chaotic} \\ \hline
{\tiny Logistic\ map} & {\tiny .....} & {\tiny .....} & {\tiny .....} & 
{\tiny Non chaotic} & {\tiny Chaotic} \\ \hline
{\tiny Tent\ map} & {\tiny .....} & {\tiny .....} & {\tiny .....} & {\tiny %
.....} & {\tiny Chaotic} \\ \hline
{\tiny Inverted\ Tent\ map} & {\tiny .....} & {\tiny .....} & {\tiny .....}
& {\tiny .....} & {\tiny .....} \\ \hline
\end{tabular}%
\]

The observation of the results of the table suggests that combining two
dynamical systems using the xor operator leads to the resulting dynamical
system to be chaotic if their graphs have some form of symmetry to the
horizontal line $y=\frac{1}{2}.$

\subsection{Mirror effect and number of full branches}

The aime of this section is to come with some criterion choice. The optimal
situation is that the two dynamical systems have to be symmetrical to the
horizontal line $y=\frac{1}{2}$ this is what we will call a mirror effect.
In this situation we can show that the combination is a chaotic dynamical
using tools from ergodic theory.

In practical this result could be a tool to choose the dynamical systems to
combine, the closet to symmetry to $y=\frac{1}{2}$ they are the best chances
the combination to work we have.

\begin{definition}
Let $I\subset \mathbb{R}$ be an interval. A map $f:I\rightarrow I$ is a full
branch map if there exists a finite or countable partition $\mathcal{P}$ of $%
I$ into subintervals such that for each $w\in \mathcal{P}$ the map $\left.
f\right\vert _{int\left( w\right) }:int\left( w\right) \rightarrow int\left(
I\right) $ is a bijection.\newline
A map $f$ is a piecewise continuous (resp $C^{1},C^{2},affine$) full branch
map if for each $w\in \mathcal{P}$ the map $\left\vert f\right. _{int\left(
w\right) }:int\left( w\right) \rightarrow int\left( I\right) $ is a
homeomorphism (resp $C^{1}$ diffeomorphism$,C^{2}$ diffeomorphism$,affine$).
\end{definition}

\begin{definition}
A full branch map has bounded distortion if 
\[
\underset{n\in \left\{ 1,2\right\} }{\sup }\underset{w^{\left( n\right) }\in 
\mathcal{P}^{\left( n\right) }}{\sup }\underset{x,y\in w^{\left( n\right) }}{%
\sup }\log \left\vert Df^{\left( n\right) }\left( x\right) /Df^{\left(
n\right) }\left( y\right) \right\vert <\infty
\]%
\newline
here $\left( n\right) $ stands for the $nth$ derivative.
\end{definition}

If the function is piecewise affine then the distortion is 0.

\begin{example}
The tent map, the doubling map and the logistic map have two full branches
with bounded distortion, the cubic map have three full branches with bounded
distortion.
\end{example}

\begin{proposition}
Consider $\left( \left[ 0,1\right] ,f\right) $ and $\left( \left[ 0,1\right]
,g\right) $ two dynamical systems, suppose that the graphs of $f$ and $g$
are symmetrical according to the horizontal line $y=\frac{1}{2}$ and that
the number of full branches of $f$ and $g$ are equal to $k$.\newline
The dynamical system $\left( \left[ 0,1\right] ,f\text{ }xor\text{ }g\right) 
$ has $2k$ full branches.
\end{proposition}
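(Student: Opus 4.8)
The plan is to reduce the statement to the analysis of a single composition. First I would record what the fuzzy xor actually computes on values: for $a,b\in\left[0,1\right]$ one has $\max\left(a,b\right)-\min\left(a,b\right)=\left\vert a-b\right\vert$, so $\left(f\text{ }xor\text{ }g\right)\left(x\right)=\left\vert f\left(x\right)-g\left(x\right)\right\vert$. The symmetry hypothesis means that the graph of $g$ is the reflection of the graph of $f$ across the line $y=\frac{1}{2}$, and since reflecting a point $\left(x,y\right)$ across that line sends it to $\left(x,1-y\right)$, this is exactly the pointwise identity $g\left(x\right)=1-f\left(x\right)$. Substituting this in gives
\[
\left(f\text{ }xor\text{ }g\right)\left(x\right)=\left\vert f\left(x\right)-\left(1-f\left(x\right)\right)\right\vert =\left\vert 2f\left(x\right)-1\right\vert .
\]
Thus the combined map is $h=\varphi \circ f$, where $\varphi \left(t\right)=\left\vert 2t-1\right\vert $ is the tent-shaped map with vertex at $t=\frac{1}{2}$.

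Next I would exploit the branch structure of $f$. Let $\mathcal{P}=\left\{ w_{1},\dots ,w_{k}\right\} $ be the partition witnessing that $f$ is a full branch map, so that on each $\mathrm{int}\left(w_{i}\right)$ the restriction $f:\mathrm{int}\left(w_{i}\right)\rightarrow \left(0,1\right)$ is a bijection. Being a continuous bijection between intervals, this restriction is strictly monotone, and since $\frac{1}{2}\in \left(0,1\right)$ there is a unique point $c_{i}\in \mathrm{int}\left(w_{i}\right)$ with $f\left(c_{i}\right)=\frac{1}{2}$; bijectivity onto the open interval guarantees $c_{i}$ is genuinely interior to $w_{i}$.

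I would then refine $\mathcal{P}$ by cutting each $w_{i}$ at $c_{i}$ into two subintervals $w_{i}^{-}$ and $w_{i}^{+}$, on whose interiors $f$ takes values in $\left(0,\frac{1}{2}\right)$ and $\left(\frac{1}{2},1\right)$ respectively. On $w_{i}^{-}$ one has $h=1-2f$, and on $w_{i}^{+}$ one has $h=2f-1$; in each case $h$ is a strictly monotone bijection onto $\left(0,1\right)$, because the relevant affine branch of $\varphi $ carries $\left(0,\frac{1}{2}\right)$, respectively $\left(\frac{1}{2},1\right)$, bijectively onto $\left(0,1\right)$, and a composition of bijections is a bijection. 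Hence the refined partition $\mathcal{Q}=\left\{ w_{1}^{-},w_{1}^{+},\dots ,w_{k}^{-},w_{k}^{+}\right\} $ has $2k$ elements, each a full branch of $h$, which is the claim. Since $\varphi $ is piecewise affine and its only non-differentiability, the vertex, falls at the cut points $c_{i}$ (which are boundary points of $\mathcal{Q}$, not interior), the composition inherits the type (piecewise $C^{1}$, $C^{2}$ or affine) of $f$ on each half-branch, and no distortion is introduced.

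The only delicate points are bookkeeping rather than substance. One must state carefully that each of the two one-sided compositions is onto the whole open interval $\mathrm{int}\left(I\right)=\left(0,1\right)$ and not merely a proper subinterval; this is where I expect to have to be most precise, and it follows cleanly from the computation that $f$ sweeps each half through $\left(0,\frac{1}{2}\right)$ or $\left(\frac{1}{2},1\right)$. The possibility that $f$ is increasing on some branches and decreasing on others does not affect the count, since in either case the vertex of $\varphi $ produces exactly one extra cut per branch, turning each of the $k$ branches of $f$ into two branches of $h$.
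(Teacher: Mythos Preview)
Your argument is correct and follows essentially the same route as the paper: both derive the explicit formula $\left(f\text{ }xor\text{ }g\right)\left(x\right)=2\left\vert f\left(x\right)-\tfrac{1}{2}\right\vert$ from the mirror symmetry and then split each full branch $w$ of $f$ at the unique preimage of $\tfrac{1}{2}$ to obtain two full branches of the combined map. Your write-up is more explicit about the bijectivity and regularity bookkeeping, but the underlying idea is identical.
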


\begin{proof}
Suppose that we have a partition $\mathcal{P}$ of $\left[ 0,1\right] $ into
subintervals such that for each $w\in \mathcal{P}$ the map $\left.
f\right\vert _{int\left( w\right) }:int\left( w\right) \rightarrow \left] 0,1%
\right[ $ is a bijection.\newline
As $g$ is a mirror of $f$ we obtain by symetry 
\[
\left( f\text{ }xor\text{ }g\right) \left( x\right) =\left\{ 
\begin{array}{c}
2\left\vert f\left( x\right) -0.5\right\vert \text{ }if\text{ }f\left(
x\right) \leq 0.5 \\ 
2\left\vert f\left( x\right) -0.5\right\vert \text{ }if\text{ }f\left(
x\right) \geq 0.5%
\end{array}%
\right. 
\]%
As $f$ is a bijection there is a partition $w=w_{1}\cup w_{2}$ such that : 
\begin{eqnarray*}
\left( f\text{ }xor\text{ }g\right) \left( x\right)  &=&\left\{ 
\begin{array}{c}
2\left\vert f\left( x\right) -0.5\right\vert \text{ }if\text{ }x\in w_{1} \\ 
2\left\vert f\left( x\right) -0.5\right\vert \text{ }if\text{ }x\in w_{2}%
\end{array}%
\right.  \\
&\Rightarrow &\left\{ 
\begin{array}{c}
\left( f\text{ }xor\text{ }g\right) \left( w_{1}\right) =\left] 0,1\right] 
\\ 
\left( f\text{ }xor\text{ }g\right) \left( w_{1}\right) =\left] 0,1\right[ 
\end{array}%
\right. 
\end{eqnarray*}%
Thus $f$ $xor$ $g$ has two full branches on $w.$
\end{proof}

\begin{proposition}
Consider $\left( \left[ 0,1\right] ,f\right) $ and $\left( \left[ 0,1\right]
,g\right) $ two dynamical systems, suppose that the graphs of $f$ and $g$
are symmetrical according to the horizontal line $y=\frac{1}{2}$ and that $f$
and $g$ have the full branch property then $f$ $xor$ $g$ is chaotic .
\end{proposition}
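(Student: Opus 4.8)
The plan is to deduce chaos from transitivity via the Proposition that an interval map is chaotic in the sense of Devaney if and only if it is transitive, and to establish transitivity through the ergodic theory of full branch maps.

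First I would record the explicit form of the combination. Symmetry of the graphs of $f$ and $g$ about the line $y=\tfrac12$ means precisely that $g(x)=1-f(x)$, so that
\[
h(x):=(f\ xor\ g)(x)=|f(x)-g(x)|=|2f(x)-1|=2|f(x)-\tfrac12|,
\]
which is exactly the expression used in the preceding proposition. Applying that proposition, $h$ is a full branch map on $[0,1]$ with $2k$ branches, each of which maps the interior of its partition element bijectively onto $(0,1)$.

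Next I would confront the essential point: the full branch property alone is \emph{not} sufficient for chaos. For instance a single increasing full branch bijection of $[0,1]$ must possess a fixed point that attracts one side, so it is never transitive. Consequently the argument must invoke the bounded distortion of $f$ and $g$ (as enjoyed by the tent, doubling, logistic and cubic maps of the Example), which I therefore read into the hypothesis. I would then check that $h$ inherits bounded distortion from $f$: the outer map $u\mapsto 2|u-\tfrac12|$ is piecewise affine and hence has zero distortion, so on each branch the logarithmic derivative ratios controlling the distortion of $h$ coincide with those of $f$, and the supremum over the refined iterate partitions $\mathcal{P}^{(n)}$ stays finite. This bookkeeping over refining partitions is the step I expect to demand the most care.

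With $h$ established as a full branch map with bounded distortion and at least two branches, I would invoke the folklore theorem for expanding Markov maps: $h$ carries an absolutely continuous invariant probability measure $\mu$ that is ergodic, and bounded distortion forces the invariant density to be bounded away from zero, whence the topological support of $\mu$ is all of $[0,1]$. By the fact recalled in the preliminaries that an ergodic system is transitive on the support of its invariant measure, $h$ is transitive on $[0,1]$. Finally, because $h$ is an interval map, the Proposition promotes transitivity to full Devaney chaos, which completes the plan.
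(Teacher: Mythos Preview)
Your plan is correct and follows essentially the same route as the paper: use the previous proposition to get $2k$ full branches, argue that $h=f\ xor\ g$ inherits bounded distortion, invoke the ergodic theory of full branch maps with bounded distortion to obtain ergodicity of an invariant measure with full support, and then pass from ergodicity to transitivity to Devaney chaos via the interval-map proposition. Your explicit flagging that bounded distortion must be read into the hypothesis (since the full branch property alone is insufficient) is a point the paper's statement omits but its proof tacitly assumes, so you are in fact being more careful here than the paper.
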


\begin{proof}
Suppose that $f$ has $k$ branches then $f$ $xor$ $g$ has $2k$ branches and a
relevant partition $w.$\newline
As the graph of $g$ is symmetrical to the graph of $f$ then they have the
same distortion.

On each branch of the partition we have 
\begin{multline*}
\underset{n\geq 1}{\sup }\underset{w^{\left( n\right) }\in \mathcal{P}%
^{\left( n\right) }}{\sup }\underset{x,y\in w^{\left( n\right) }}{\sup }\log
\left\vert D\left( f\text{ }xor\text{ }g\right) ^{n}\left( x\right) /D\left(
f\text{ }xor\text{ }g\right) ^{n}\left( y\right) \right\vert \leq  \\
\max \left( 
\begin{array}{c}
\underset{n\geq 1}{\sup }\underset{w^{\left( n\right) }\in \mathcal{P}%
^{\left( n\right) }}{\sup }\underset{x,y\in w^{\left( n\right) }}{\sup }\log
\left\vert Df\text{ }^{\left( n\right) }\left( x\right) /D\left( f\text{ }%
\right) ^{n}\left( y\right) \right\vert  \\ 
,\underset{n\geq 1}{\sup }\underset{w^{\left( n\right) }\in \mathcal{P}%
^{\left( n\right) }}{\sup }\underset{x,y\in w^{\left( n\right) }}{\sup }\log
\left\vert D\left( g\right) ^{n}\left( x\right) /D\left( g\right) ^{n}\left(
y\right) \right\vert 
\end{array}%
\right) 
\end{multline*}%
Hence $f$ $xor$ $g$ is a full branch map with bounded distortion. Then the
Lebesgue measure is ergodic \cite{Ste}.\newline
As the Lebesgue measure is ergodic then $f$ $xor$ $g$ is transitive on the
topological support of the Lebesgue measure.Hence it is chaotic.
\end{proof}

\section{Conclusion}

We investigated an ideal situation to combine chaotic one dimensional maps
using the fuzzy logic xor operator.

Using tools from ergodic theory we were able to establish a result using
slightly strong condition than Devaney's chaos, it is worth noting that this
condition is satisfied by a majority of classical chaotic maps on the
interval.


\begin{thebibliography}{99}
\bibitem{Bap98} Baptista M. S. Cryptography with chaos / Baptista M. S.
Physics Letters A. -- 240(1-2). - 1998. -- P. 50--54.

\bibitem{CD04} R. Chemlal, I Djellit, Coding Information and Problems of
storage in Dynamical Systems, FACTA\ UNIVERSITATIS, SER ELEC\ ENERG, Vol 17,
December 2004, 355-363.

\bibitem{Dmi91} Dmitriev A. S. \textquotedblright Storing and Recognizing
Information with One-Dimensional Dynamic Systems\textquotedblright\ (In
Russian). Radiotechnika i Elektrononika (1991), vol. 36, n%
${{}^\circ}$%
1, pp 101-108.

\bibitem{Dmi02} A.A. Dmitriev Design of Message-Carrying Chaotic Sequences
2002 Nonlinear Phenomena in Complex Systems.

\bibitem{Kwan03} H. K. Kwan,THREE-LAYER BIDIRECTIONAL ASYMMETRICAL
ASSOCIATIVE MEMORY, IEEE\ 2003.

\bibitem{Koc01} Kocarev L. Chaos-based cryptography: A brief overview, IEEE
Circuits and Systems Maganize. 2001. N$%
{{}^\circ}%
$1. P.6--21.

\bibitem{Mykola2020} Mykola Kushnir, Yuriy Fedkovych, Petro Kroialo,
Hryhorii Kosovan. Encryption of the Images on the Basis of Two Chaotic
Systems with the Use of Fuzzy Logic. 2020 IEEE 15th International Conference
on Advanced Trends in Radioelectronics, Telecommunications and Computer
Engineering (TCSET).

\bibitem{Pareek05} Cryptography using multiple one-dimensional chaotic maps
N.K. Pareek a,b, Vinod Patidar a, K.K. Sud. Communications in Nonlinear
Science and Numerical Simulation 10 (2005) 715--723

\bibitem{Sch01} Schwarz W. D. Chaos and cryptography / Schwarz W. D.,
Dachselt W. IEEE Trans. Circuits Syst. I. -- 48 (12). - 2001.1498--1509.

\bibitem{Ste} INTRODUCTION TO SMOOTH ERGODIC THEORY, S.Luzzato , ICTP,
available online at
https://eshop.zivestavby.cz/cs/shop/products/plantbox-sada-3-truhliku

\bibitem{Rou00} S.Rouabhi \textquotedblright Storage of Information in One
Dimensional Piecewise Continuous Maps\textquotedblright\ International
Journal of Bifurcation and Chaos. (2000) vol 10(5) pp 1127-1137.

\bibitem{Vai03} P.G. Vaidya , S. Angadi, Decoding chaotic cryptography
without access to the superkey, , Chaos, Solitons and Fractals 17 (2003)
379--386.

\bibitem{Vel94} M. Vellekoop and R. Berglund. On intervals, transitivity =
chaos. Amer. Math. Monthly,101(4):353-355, 1994.
\end{thebibliography}
\end{document}